\documentclass[12pt]{amsart}
\usepackage{amssymb}
\usepackage[all,cmtip]{xy}
\usepackage{adjustbox}
\usepackage{relsize}
\usepackage{mathtools}

\linespread{1.11}
\usepackage{microtype}

\makeatletter
\@namedef{subjclassname@2010}{\textup{2010} Mathematics Subject Classification}
\makeatother

\setlength\oddsidemargin{.8mm}
\setlength\evensidemargin{.8mm}
\setlength\textheight{20cm}
\setlength\textwidth{16.2cm}

\DeclareMathOperator*{\bigast}{\raisebox{-0.6ex}{\scalebox{2.5}{$\ast$}}}

\newtheorem{theorem}{Theorem}[section]
\newtheorem*{theorem*}{Theorem}
\newtheorem{lemma}[theorem]{Lemma}
\newtheorem{proposition}[theorem]{Proposition}
\newtheorem{corollary}[theorem]{Corollary}
\theoremstyle{definition}

\newtheorem{example}[theorem]{Example}

\newtheorem{remark}[theorem]{Remark}

\begin{document}

\title{Equivariant maps between representation spheres}
\author{Zbigniew B{\l}aszczyk}
\author{Wac{\l}aw Marzantowicz}
\author{Mahender Singh}

\address{Faculty of Mathematics and Computer Science, Adam Mickiewicz University of Pozna{\'n}, ul. Umultowska 87, 61-614 Pozna{\'n}, Poland.}
\email{blaszczyk@amu.edu.pl}
\email{marzan@amu.edu.pl}

\address{Indian Institute of Science Education and Research (IISER) Mohali, Sector 81, Knowledge City, SAS Nagar (Mohali), Post Office Manauli, Punjab 140306, India.}
\email{mahender@iisermohali.ac.in}

\subjclass[2010]{Primary 55S37; Secondary 55M20, 55S35, 55N91.}
\keywords{Borsuk--Ulam theorem, equivariant map, Euler class, representation sphere}

\begin{abstract}
Let $G$ be a compact Lie group. We prove that if $V$ and $W$ are orthogonal $G$-representations such that $V^G=W^G=\{0\}$, then a $G$-equivariant map $S(V) \to S(W)$ exists provided that $\dim V^H \leq \dim W^H$ for any closed subgroup $H\subseteq G$. This result is complemented by a reinterpretation in terms of divisibility of certain Euler classes when $G$ is a torus.
\end{abstract}

\maketitle

\section{Introduction}

A basic problem in the theory of transformation groups is to find necessary and sufficient conditions for the existence of a $G$-equivariant map between two $G$-spaces. Perhaps the most well-known result in the necessary direction is the celebrated Borsuk--Ulam theorem~\cite{Borsuk}, which states that if $V$ and $W$ are two orthogonal fixed-point free $\mathbb{Z}_2$-representations, then the existence of a $\mathbb{Z}_2$-equivariant map $S(V) \to S(W)$ implies that $\dim V \leq \dim W$. This result has numerous and far reaching generalizations, see e.g. \cite{Matousek}, \cite{Zivaljevic} for an overview. One such generalization, particularly interesting from the point of view of this note, is:

\begin{theorem}[{\cite{Marzantowicz2}}]\label{thm:motivation}
Let $V$ and $W$ be orthogonal representations of $G=(\mathbb{S}^1)^k$ or \mbox{$G=(\mathbb{Z}_p)^{\ell}$}, $p$~a~prime, such that $V^G=W^G=\{0\}$. If there exists a $G$-equivariant map \mbox{$S(V) \to S(W)$}, then
\begin{equation}\label{A}
\dim V^H \leq \dim W^H \textit{ for any closed subgroup $H
\subseteq G$.}\tag{$\ast$}
\end{equation}
\end{theorem}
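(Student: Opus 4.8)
The plan is to first reduce the statement to the special case $H=\{e\}$ and then to settle that case by a standard Euler-class (Fadell--Husseini index) argument. For the reduction, note that since $G$ is abelian every closed subgroup $H$ is normal, so $V^H$ and $W^H$ are $G$-invariant subspaces and the quotient $Q=G/H$ acts on them, with $(V^H)^Q=V^G=\{0\}$ and likewise $(W^H)^Q=\{0\}$. Moreover $Q$ is again of the required type: if $G=(\mathbb{S}^1)^k$ then $G/H$ is a quotient of a torus by a closed subgroup, hence again a torus, while if $G=(\mathbb{Z}_p)^{\ell}$ then $G/H\cong(\mathbb{Z}_p)^{\ell-\dim H}$. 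Finally, passing to $H$-fixed points turns a $G$-map $f\colon S(V)\to S(W)$ into the $Q$-map $f^H\colon S(V^H)\to S(W^H)$, where one uses that the normalizer of $H$ equals $G$. Thus it suffices to prove the following core case: for $Q$ a torus or an elementary abelian $p$-group and $Q$-representations $V',W'$ with $(V')^Q=(W')^Q=\{0\}$, the existence of a $Q$-map $S(V')\to S(W')$ forces $\dim V'\le\dim W'$; applying this with $V'=V^H$ and $W'=W^H$ yields $(\ast)$.

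To prove the core case I would work in equivariant cohomology with coefficients $\mathbb{Q}$ when $Q$ is a torus and $\mathbb{F}_p$ when $Q=(\mathbb{Z}_p)^{\ell}$, so that the relevant part of $H^*(BQ)$ is a polynomial ring, in particular an integral domain. For any $Q$-representation $U$ with $U^Q=\{0\}$, the Gysin sequence of the sphere bundle $S(U)\to S(U)_{hQ}\to BQ$ identifies the kernel of the structure map $H^*(BQ)\to H^*_Q(S(U))$ with the ideal generated by the equivariant Euler class $e(U)\in H^{\dim U}(BQ)$. Because $U^Q=\{0\}$, the representation $U$ splits into nontrivial irreducibles, each contributing a nonzero homogeneous Euler class, and $e(U)$ is their product, hence a nonzero element of degree $\dim U$.

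A $Q$-map $S(V')\to S(W')$ induces a map of Borel constructions over $BQ$, and naturality of the structure maps shows that anything killed in $H^*_Q(S(W'))$ is killed in $H^*_Q(S(V'))$; this gives the inclusion of ideals $(e(W'))\subseteq(e(V'))$, equivalently that $e(V')$ divides $e(W')$. As both Euler classes are nonzero and we are in an integral domain, divisibility forces $\deg e(V')\le\deg e(W')$, that is $\dim V'\le\dim W'$, which is exactly what the core case asserts.

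The main obstacle is the cohomological bookkeeping of the core case rather than the formal reduction: one must verify that the index of $S(U)$ is exactly the Euler ideal $(e(U))$ and that $e(U)$ is a nonzero product of nontrivial weights, which rests on the localization theorem together with the explicit structure of $H^*(BQ)$ --- a polynomial ring for tori over $\mathbb{Q}$ and for $(\mathbb{Z}_2)^{\ell}$ over $\mathbb{F}_2$, and a polynomial algebra tensored with an exterior algebra for odd $p$, in which case one restricts attention to the even-degree polynomial part that carries the Euler classes. Once these facts are established, the divisibility step is purely algebraic.
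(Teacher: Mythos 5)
Note first that the paper does not prove Theorem~\ref{thm:motivation}: it is imported verbatim from \cite{Marzantowicz2}, so there is no in-paper argument to compare against. Your proposal is correct and follows essentially the same route as the cited source, namely the ideal-valued index/Euler class argument: reduce to $H=\{e\}$ by passing to $H$-fixed points (legitimate since $G$ is abelian, so $V^H$, $W^H$ are $G/H$-representations with trivial fixed sets, and $G/H$ is again a torus resp.\ a $p$-torus), then use the Gysin sequence to identify $\ker\big(H^*(BQ)\to H^*_Q(S(U))\big)$ with the principal ideal $(e(U))$ and conclude $e(V')\mid e(W')$ by naturality. The only points that deserve explicit mention are routine: the degenerate case $V^H=0$ is trivial; for odd $p$ the Euler classes live in the polynomial subalgebra on the Bocksteins, and since both classes are nonzero and homogeneous, the graded divisibility $e(W')=c\,e(V')$ forces $\deg c\geq 0$, i.e.\ $\dim V'\leq\dim W'$. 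This is exactly the mechanism underlying \cite{Marzantowicz2} (and the divisibility statement reappears in the paper as Lemma~\ref{lemma:divisibility}), so I have nothing substantive to add.
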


On the other hand, sufficient conditions for the existence of $G$-equivariant maps between representation spheres have not been investigated nearly extensively. This is our starting point: we prove in Corollary~\ref{main-cor} that~\eqref{A} is sufficient for the existence of a $G$-equivariant map \mbox{$S(V) \to S(W)$} for \textit{any} compact Lie group $G$. It is not a new result in the sense that it can be extracted from the existing literature, see \cite[Chapter II]{Dieck}, although it is rather buried in the text. This, coupled with the fact that the second-named author has been inquired about converses to various versions of the Borsuk--Ulam theorem, makes us believe that it is worthwhile to carefully spell the details out.

A corollary to the discussion above is that if $G$ is a torus or a $p$-torus, then \eqref{A} is equivalent to the existence of a $G$-equivariant map $S(V)\to S(W)$. When $G$ is a torus, we reinterpret this result in terms of divisibility of Euler classes of $V^H$ and~$W^H$ in $H^*(BG;\mathbb{Z})$. This angle of research has been pursued previously in various guises, e.g. by Marzantowicz~\cite{Marzantowicz1} (in~the same setting, for $G$ a compact Lie group) and Komiya~\cite{Komiya1},~\cite{Komiya2} (with $K$-theoretic Euler classes, for $G$ an abelian compact Lie group). However, in each case only the necessary criteria were described.


\section{Preliminaries}\label{prelim}

\subsection{Notation}

Let $G$ be a compact Lie group. If $H \subseteq G$ is a closed subgroup, then~$NH$ denotes the normalizer of $H$ in $G$ and $WH=NH/H$ the Weil group of $H$. Given a $G$-space~$X$, write $\mathcal{O}(X)$ for the set of isotropy groups of $X$. If $H \in \mathcal{O}(X)$, then $(H)$ stands for its conjugacy class, referred to as an \textit{orbit type}. There is a natural partial order on the set of orbit types of $X$, namely:
\[ (H) \leq (K) \textnormal{ if and only if $K$ is conjugate to a subgroup of $H$}. \]

Recall that a finite-dimensional \textit{$G$-complex} is a $G$-space $X$ that possesses a filtration
\[ X_{(0)} \subseteq X_{(1)} \subseteq \cdots \subseteq X_{(n)} = X \] 
by $G$-invariant subspaces, with $X_{(k+1)}$ obtained from $X_{(k)}$ by attaching equivariant cells $D^{k+1}\times G/H$ via $G$-equivariant maps $S^k \times G/H \to X_{(k)}$, $0 \leq k \leq n-1$. The space $X_{(k)}$ is called the \textit{$k$-skeleton} of $X$ and the integer $n$ is the \textit{\textnormal{(}cellular\textnormal{)} dimension} of $X$. 

Observe that if $X$ is a $G$-complex and $H\subseteq G$ is a closed subgroup, then $X^H=$ \mbox{$\{x \in X \,|\, hx=x \textnormal{ for any $h\in H$}\}$}, the \textit{$H$-fixed point set} of $X$, is a $WH$-complex, while $X^{(H)} = \bigcup_{K \in (H)} X^K$ is a $G$-subcomplex of $X$. The cellular dimension of~$X^{(H)}$ as a $G$-complex is equal to the cellular dimension of $X^H$ as a~$WH$-complex. We denote this dimension by $d_H(X)$.

\subsection{Euler classes calculus}\label{sect:Euler}

Let $G \hookrightarrow EG \to BG$ be the universal principal $G$-bundle and $V$ an orthogonal $G$-representation. The \textit{Borel space} $EG \times_G V = (EG\times V)/G$, where the orbit space is taken with respect to the diagonal action, is a vector bundle with base space $BG$ and fibre $V$. Provided that this bundle is $R$-orientable for some ring $R$, its Euler class, denoted $e(V)$, is called the \textit{Euler class of $V$} (over $R$). 

Let $G=\mathbb{T}^k = (\mathbb{S}^1)^k$. Recall that any non-trivial irreducible orthogonal representation of~$G$ is given by 
\[ V_{(\alpha_1,\ldots,\alpha_k)} = V_1^{\alpha_1}\otimes \cdots \otimes V_k^{\alpha_k}, \]
where the tensor product is considered over the field of complex numbers, and: 
\begin{itemize}
\item $V_i$ stands for the irreducible complex $G$-representation corresponding to the projection $G \to \mathbb{S}^1$ onto the $i$-th coordinate, $1\leq i \leq k$,
\item $V^j$ denotes the $j$-th tensor power of a representation $V$,
\item $0\leq \alpha_i$ for any $1\leq i \leq k$.
\end{itemize}
In particular, every non-trivial irreducible orthogonal $G$-representation is complex one-dimensional and admits complex structure. Consequently, the latter is also true for any orthogonal $G$-representation $V$ without a trivial direct summand, and it follows that the corresponding vector bundle $EG\times_G V$ is integrally orientable. 

Now recall that 
\[ H^*(BG;\mathbb{Z}) \cong \mathbb{Z}[t_1,\ldots,t_k], \] 
where $t_i=e(V_i)$ for $1\leq i \leq k$. Using the facts that $e(V\oplus W)=e(V)e(W)$ and, for one-dimensional representations, $e(V\otimes W)=e(V) + e(W)$, we see that the Euler class of $V=\bigoplus_{\alpha}r_{\alpha}V_{\alpha}$ is given by 
\[ e(V)= \prod_{\alpha} (\alpha_1 t_1 + \cdots + \alpha_k t_k)^{r_{\alpha}}. \] 
In particular, $e(V)=0$ if and only if $V$ contains a trivial direct summand.


\section{The existence of equivariant maps for compact Lie groups}\label{section-compact-lie}
Throughout this section $G$ is a compact Lie group. We will be interested in the existence of $G$-equivariant maps between representation spheres. The main result of this section is Corollary \ref{main-cor}, and the main ingredient in its proof is the following fact from equivariant obstruction theory.

\begin{theorem}[{\cite[Chapter II, Proposition 3.15]{Dieck}}]\label{obstruction_easy}
Let $n \geq 1$ be an integer. Suppose that $(X,A)$ is a relative
$G$-complex with a free action on $X\setminus A$ and $Y$ is an  $(n-1)$-connected and $n$-simple $G$-space.
\begin{enumerate}
\item[\textnormal{(1)}] Any $G$-equivariant map $A \to Y$ can be extended over the $n$-skeleton of $X$.
\item[\textnormal{(2)}] Let $f_0$, $f_1 \colon A \to Y$ be $G$-equivariant maps and $\tilde{f}_0$, $\tilde{f}_1 \colon X_{(n)} \to Y$ their extensions. If $f_0$ and $f_1$ are $G$-homotopic, then there exists a $G$-homotopy between $\tilde{f}_0|_{X_{(n-1)}}$ and $\tilde{f}_1|_{X_{(n-1)}}$ extending the one between $f_0$ and $f_1$.
\end{enumerate}
\end{theorem}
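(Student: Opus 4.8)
The plan is to argue by the standard equivariant obstruction-theoretic induction over skeleta, exploiting the freeness of the action on $X \setminus A$ to reduce every equivariant extension problem to an ordinary, non-equivariant one, and then to kill the resulting obstructions using the connectivity hypothesis on $Y$. The crucial observation throughout is that on a free cell $D^k \times G$ a $G$-equivariant map into $Y$ is completely determined by, and freely prescribed by, its restriction to $D^k \times \{e\}$: given any ordinary map $\phi \colon D^k \to Y$ the assignment $(g,x) \mapsto g\cdot \phi(x)$ is the unique $G$-equivariant extension, and the same dictionary applies on the boundary $S^{k-1}\times G$.

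For part (1), I would build the extension one dimension at a time. Assume a $G$-map has already been defined on $X_{(k-1)}$ for some $1 \le k \le n$, and let $D^k \times G$ be one of the free $k$-cells, attached via $\varphi \colon S^{k-1}\times G \to X_{(k-1)}$. Composing $\varphi$ with the map in hand and restricting to $S^{k-1}\times\{e\}$ yields an ordinary map $S^{k-1} \to Y$, and by the dictionary above extending the $G$-map over $D^k \times G$ is equivalent to extending this map over the disk $D^k$. The latter is possible precisely when its class in $\pi_{k-1}(Y)$ vanishes, which it does because $k-1 \le n-1$ and $Y$ is $(n-1)$-connected. Performing this simultaneously over all free $k$-cells and inducting on $k$ from $1$ up to $n$ produces the extension over $X_{(n)}$.

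For part (2), I would repackage the statement as a single extension problem. The given $G$-homotopy $H \colon A \times I \to Y$, together with $\tilde f_0|_{X_{(n-1)}}$ on $X_{(n-1)} \times \{0\}$ and $\tilde f_1|_{X_{(n-1)}}$ on $X_{(n-1)} \times \{1\}$, defines a $G$-map on the subspace $B = (A \times I) \cup (X_{(n-1)} \times \partial I)$ of $X_{(n-1)} \times I$. The pair $(X_{(n-1)} \times I, B)$ is again a relative $G$-complex with free action on the complement, and each free relative $k$-cell $D^k \times G$ of $(X,A)$ with $k \le n-1$ contributes the relative $(k+1)$-cell $D^k \times I \times G \cong D^{k+1}\times G$. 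Thus $X_{(n-1)} \times I$ is obtained from $B$ by attaching free cells of dimension at most $n$, and extending the map over them is governed, cell by cell exactly as in part (1), by obstructions lying in $\pi_k(Y)$ with $k \le n-1$. These vanish by $(n-1)$-connectedness, so the map extends over $X_{(n-1)} \times I$, which is the required $G$-homotopy between $\tilde f_0|_{X_{(n-1)}}$ and $\tilde f_1|_{X_{(n-1)}}$ extending $H$.

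The only genuinely delicate point is the freeness reduction itself: one must verify that restricting to a single representative of each orbit and re-extending by the group action yields a well-defined $G$-map, that these local extensions are compatible across the overlaps where cells are glued, and that the identification of equivariant maps out of $D^k \times G$ with ordinary maps out of $D^k$ is natural enough for the inductive step to cohere. Once this bookkeeping is settled the remainder is a routine connectivity argument; I would note that the hypothesis that $Y$ be $n$-simple is not actually needed for these two claims, since every obstruction group that arises vanishes outright, and it is carried along only because it belongs to the ambient setup of the cited obstruction-theoretic framework, where it guarantees well-definedness of obstruction and difference classes in the borderline dimension.
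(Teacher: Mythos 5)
The paper gives no proof of this statement---it is imported verbatim from tom Dieck \cite[Chapter II, Proposition 3.15]{Dieck} and used as a black box---so there is no in-paper argument to compare against. Your skeleton-by-skeleton induction, reducing each free cell $D^k\times G$ to a non-equivariant extension problem over $D^k$ and killing the resulting classes in $\pi_{k-1}(Y)$ (resp.\ $\pi_k(Y)$ for the cells of $(X_{(n-1)}\times I, B)$) by $(n-1)$-connectedness, is exactly the standard proof of this result and is correct, including your side remark that $n$-simplicity is not actually invoked for these two assertions.
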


As a matter of fact, Theorem \ref{main-thm-abstract} below is also formulated in \cite[Chapter II]{Dieck}, but its proof is spread throughout the text. We provide what we believe to be a more accessible treatment for the convenience of the reader.

\begin{theorem}\label{main-thm-abstract}
Let $X$ be a finite $G$-complex and $Y$ a $G$-space such that $Y^{(H)}$
is non-empty for any minimal orbit type $(H)$ of $X$.
\begin{enumerate}
\item[\textnormal{(1)}] If $Y^H$ is $(d_H(X)-1)$-connected and $d_H(X)$-simple for any $H \in \mathcal{O}(X)$, then there exists a $G$-equivariant map $X \to Y$.
\item[\textnormal{(2)}] If $Y^H$ is $d_{H}(X)$-connected and $(d_{H}(X)+1)$-simple for any $H \in \mathcal{O}(X)$, then any two $G$-equivariant maps $X \to Y$ are $G$-homotopic.
\end{enumerate}
\end{theorem}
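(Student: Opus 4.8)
The plan is to induct over the orbit types of $X$, extending the map one stratum at a time and reducing each individual extension problem to a \emph{free}-action problem for a Weyl group, where Theorem~\ref{obstruction_easy} applies directly. Fix a linear extension $(H_1),\ldots,(H_m)$ of the partial order on $\mathcal{O}(X)$, so that $(H_i)\le (H_j)$ forces $i\le j$; in particular the minimal orbit types come first. Set $A_k=\bigcup_{i\le k}X^{(H_i)}$, a nested sequence of $G$-subcomplexes with $A_0=\emptyset$ and $A_m=X$. A short check using the compatibility of the two orders shows that $A_k\smallsetminus A_{k-1}$ consists exactly of the points of orbit type $(H_k)$, and that $A_k^{H_k}=X^{H_k}$ while $A_{k-1}^{H_k}=\{x\in X^{H_k}:G_x\supsetneq H_k\}=:X^{>H_k}$.

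For the existence statement~(1), I would construct $G$-maps $A_k\to Y$ by induction on $k$, the case $A_0=\emptyset$ being vacuous; the hypothesis that $Y^{(H)}\ne\emptyset$ for minimal $(H)$ (equivalently $Y^H\ne\emptyset$) is what makes the first genuine steps possible. For the inductive step, write $H=H_k$ and note that restriction to $H$-fixed points turns a $G$-map on $A_{k-1}$ into a $WH$-map $A_{k-1}^H=X^{>H}\to Y^H$, while conversely a $WH$-map on $X^H$ extending it spreads out by $G$-equivariance, $gx\mapsto gf(x)$, to a $G$-map on $A_k$ agreeing with the old one on $A_{k-1}$. The extension problem thus becomes: extend a $WH$-map $X^{>H}\to Y^H$ over the relative $WH$-complex $(X^H,X^{>H})$. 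Here $WH$ acts freely on $X^H\smallsetminus X^{>H}$, since a point of $X^H$ with isotropy exactly $H$ has trivial $WH$-stabilizer, and this relative complex has dimension $d_H(X)$ by definition. Applying Theorem~\ref{obstruction_easy}(1) with $n=d_H(X)$ --- whose hypotheses are precisely that $Y^H$ be $(d_H(X)-1)$-connected and $d_H(X)$-simple --- extends the map over the whole $d_H(X)$-skeleton, i.e.\ over all of $X^H$, completing the step.

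The uniqueness statement~(2) follows the same induction applied to $X\times I$, which has the same orbit types as $X$ but satisfies $(X\times I)^H=X^H\times I$, and hence $d_H(X\times I)=d_H(X)+1$. Given two $G$-maps $f_0,f_1\colon X\to Y$, a $G$-homotopy between them is a $G$-map $X\times I\to Y$ restricting to $f_0\sqcup f_1$ on $X\times\partial I$. Inducting over the strata of $X\times I$, at the orbit type $(H)$ one must extend a $WH$-map defined on $(X^{>H}\times I)\cup(X^H\times\partial I)$ over a relative $WH$-complex whose added cells all carry a free $WH$-action and whose dimension is $d_H(X)+1$. Applying Theorem~\ref{obstruction_easy}(1) now with $n=d_H(X)+1$, and using that $Y^H$ is $d_H(X)$-connected and $(d_H(X)+1)$-simple, produces the desired $G$-homotopy.

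The main obstacle --- and the point deserving the most care --- is the reduction from an equivariant extension problem over a single orbit-type stratum to a free-action problem for $WH$ on the fixed point set $X^H$. This requires verifying the correspondence between $G$-maps on the $(H)$-stratum and $WH$-maps on $X^H$ (using that $H$ acts trivially on both $X^H$ and $Y^H$, so that $NH$- and $WH$-equivariance coincide), checking freeness of the residual $WH$-action together with the identification of the cellular dimension with $d_H(X)$, and ensuring that the locally built pieces glue into a genuine $G$-map extending the previous stage. Once this dictionary is in place, both parts reduce to bookkeeping of the orbit-type filtration combined with a single invocation of Theorem~\ref{obstruction_easy}.
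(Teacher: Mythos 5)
Your proposal is correct and follows essentially the same strategy as the paper: induct over the orbit types (your linear extension of the partial order is just a bookkeeping variant of the paper's induction over $X^{<(K)}$), reduce each stage to a free $WH$-extension problem over $(X^{H},X^{>H})$ via the correspondence between $G$-maps on the $(H)$-stratum and $WH$-maps on $X^{H}$, and invoke Theorem~\ref{obstruction_easy}. The only cosmetic difference is in part~(2), where you apply part~(1) of Theorem~\ref{obstruction_easy} to $X\times I$ rather than its part~(2) directly; the dimension count $d_H(X\times I)=d_H(X)+1$ matches the stated connectivity hypotheses either way.
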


\begin{proof}
(1) In order to construct a $G$-equivariant map $f \colon X \to Y$, we will proceed inductively with respect to partial order on the set of orbit types of $X$. 

If $H$ is a representative of a minimal orbit type of $X$, then $X^H$ is a free $WH$-complex. Define a $WH$-equivariant map $(X^H)_{(0)} \to Y^H$ by sending the $0$-cells of~$X^H$ to an arbitrary orbit of $Y^H$ and extend it to a map $f^H \colon X^H \to Y^H$ by means of Theorem \ref{obstruction_easy}. Since $X^{(H)}$ has a single orbit type, it is $G$-homeomorphic to $(G/H) \times_{WH} X^H$ by \mbox{\cite[Chapter II, Corollary 5.11]{Bredon}}. We can therefore saturate $f^H$ to obtain a $G$-equivariant map $X^{(H)} \to Y^{(H)}$ via the composition
\[ X^{(H)} \approx (G/H) \times_{WH} X^H \to (G/H) \times_{WH} Y^H \to Y^{(H)}, \]
where the last map is given by $[gH,y] \mapsto gy$ (see \cite[Chapter II, Corollary 5.12]{Bredon}).

It is straightforward to see that any two distinct minimal orbit types $(H_i)$, $(H_j)$ have $X^{(H_i)}\cap X^{(H_j)} = \emptyset$, thus the above procedure yields a $G$-equivariant map 
\[ \bigcup_{(H)} X^{(H)} \to \bigcup_{(H)} Y^{(H)},\] 
where $(H)$ runs over all minimal orbit types of $X$.

Now choose $K \in \mathcal{O}(X)$ and assume inductively that $f$ is defined on a subcomplex
\[ X^{<(K)} = \bigcup_{(H) < (K)} X^{(H)}.\]
By construction, $f$ takes values in $Y^{<(K)}$. In view of \cite[Chapter I, Proposition 7.4]{Dieck}, $G$-extensions of $X^{<(K)} \to Y^{<(K)}$ to $X^{(K)} \to Y^{(K)}$ are in one-to-one correspondence with $WK$-extensions of $X^{<K} \to Y^{<K}$ to $X^K \to Y^K$. However, the $WK$-action on $X^K\setminus X^{<K}$ is free, hence Theorem \ref{obstruction_easy} applied to the relative complex $\big(X^{K}, X^{<K}\big)$ results in a $WK$-equivariant map $X^K \to Y^K$. There are only finitely many orbit types, hence this process stops after a finite number of steps, producing a $G$-equivariant map $X \to Y$.

(2) Let $H \in \mathcal{O}(X)$ be a representative of a minimal orbit type. Since
$Y^H$ is path-connected, any two $NH$-equivariant maps $WH \to Y^H$ are $G$-homotopic. Therefore any two $G$-equivariant maps $(X^H)_{(0)} \to Y^H$ are also $G$-homotopic. It now suffices to successively apply the second part of Theorem \ref{obstruction_easy} just as above.
\end{proof}

As a consequence, we obtain the following corollary.

\begin{corollary}\label{main-cor}
Let $V$ and $W$ be two orthogonal $G$-representations with $V^G=W^G=\{0\}$. 
\begin{enumerate}
\item[(1)] If $\dim(V^H) \leq \dim(W^H)$ for any $H\in\mathcal{O}\big(S(V)\big)$, then there exists a $G$-equivariant map $S(V) \to S(W)$.
\item[(2)] If, additionally, $G$ is connected and for any $H\in\mathcal{O}\big(S(V)\big)$ we have $\dim WH > 0$, then any two $G$-equivariant maps $S(V) \to S(W)$ are $G$-homotopic.
\end{enumerate}
\end{corollary}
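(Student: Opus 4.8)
The plan is to deduce both statements from Theorem~\ref{main-thm-abstract} by taking $X = S(V)$ and $Y = S(W)$. The starting observation is that $S(V)$ is a finite $G$-complex, and that for any closed subgroup $H \subseteq G$ one has $S(V)^H = S(V^H)$ and $S(W)^H = S(W^H)$, the latter being a topological sphere of dimension $\dim W^H - 1$; in particular $S(W^H)$ is $(\dim W^H - 2)$-connected and $n$-simple in every relevant degree (the degenerate low-dimensional cases being checked directly). If $H \in \mathcal{O}(S(V))$, then $H = G_v$ for some $v \in S(V)$, so $v \in V^H$ and $\dim V^H \geq 1$; combined with the hypothesis $\dim V^H \leq \dim W^H$ this gives $S(W^H) \neq \emptyset$, which in particular secures the nonemptiness of $Y^{(H)}$ for every minimal orbit type demanded by Theorem~\ref{main-thm-abstract}. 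The whole proof then reduces to comparing the connectivity of $S(W^H)$ with the cellular dimension $d_H(S(V))$.

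For part (1) I would use only the crude bound $d_H(S(V)) \leq \dim V^H - 1$, valid because $d_H(S(V))$ is the cellular dimension of the sphere $S(V^H)$, which cannot exceed its topological dimension $\dim V^H - 1$. Hence $d_H(S(V)) - 1 \leq \dim V^H - 2 \leq \dim W^H - 2$, so $S(W^H)$ is $(d_H(S(V))-1)$-connected and $d_H(S(V))$-simple for every $H \in \mathcal{O}(S(V))$, and Theorem~\ref{main-thm-abstract}(1) produces the desired $G$-equivariant map. Part (2) demands one extra degree of connectivity, namely that $S(W^H)$ be $d_H(S(V))$-connected; since $S(W^H)$ is only $(\dim W^H - 2)$-connected, this will follow once I show that the hypotheses of part (2) force the sharper bound $d_H(S(V)) \leq \dim V^H - 2$. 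This is the main point. The argument I have in mind is that the identity component $WH_0$, being positive-dimensional, must act nontrivially on $V^H$: otherwise the identity component of $NH$ would fix the vector $v$ with $G_v = H$, giving $(NH)_0 \subseteq H$ and contradicting $\dim WH > 0$. Therefore $(V^H)^{WH_0}$ is a proper subspace of $V^H$, so its unit sphere has dimension at most $\dim V^H - 2$. Every point of $S(V^H)$ whose $WH$-orbit is $0$-dimensional lies in this subsphere, while the cells supported on positive-dimensional orbits contribute cellular degree at most $(\dim V^H - 1) - 1$; in either case the degree is bounded by $\dim V^H - 2$, whence $d_H(S(V)) \leq \dim V^H - 2 \leq \dim W^H - 2$. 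Thus $S(W^H)$ is $d_H(S(V))$-connected and $(d_H(S(V))+1)$-simple, and Theorem~\ref{main-thm-abstract}(2) yields uniqueness up to $G$-homotopy.

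I expect the delicate step to be the cellular-dimension estimate in part (2): one must organize $S(V^H)$ by $WH$-orbit type and verify that no top-dimensional cell survives with $0$-dimensional orbit, which is exactly where positivity of $\dim WH$ and the realization of $H$ as an isotropy group of $S(V)$ enter; the standing assumptions that $G$ is connected and $\dim WH > 0$ are used precisely to run this estimate. The remaining verifications — the connectivity and simplicity of representation spheres in the relevant degrees — are routine, and no orientability hypothesis is needed at this stage.
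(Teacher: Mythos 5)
Your proposal is correct and follows essentially the same route as the paper: apply Theorem~\ref{main-thm-abstract} to $X=S(V)$, $Y=S(W)$, using that $d_H(S(V))$ equals the dimension of the orbit space $S(V)^H/WH$, which is at most $\dim V^H-1$ in general and at most $\dim V^H-2$ when $\dim WH>0$. In fact your treatment of part (2) — splitting $S(V^H)$ into the points fixed by $(WH)_0$ (a proper subsphere, by the existence of $v$ with $G_v=H$) and the points on positive-dimensional orbits — spells out a dimension estimate that the paper's one-line proof leaves implicit.
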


\begin{proof}
Let $H \in \mathcal{O}\big(S(V)\big)$ and note that the cellular dimension of the $G$-complex $S(V)^{(H)}$ is at most $\dim V^H-1$, since this dimension is equal to the dimension of the orbit space $S(V)^{(H)}/G = S(V)^H/\,WH$. On the other hand, the space $S(W)^H$ is non-empty, simple and $(\dim W^H-2)$-connected. Since $\dim V^H\leq \dim W^H$, applying Theorem \ref{main-thm-abstract} concludes the proof.
\end{proof}


\section{Torus equivariant maps between representation spheres}\label{section-torus}

\subsection{}

Let $G=\mathbb{T}^k$. Unless otherwise stated, $V$ and $W$ are assumed to be orthogonal $G$-representations such that $V^G=W^G=\{0\}$. Given a decomposition of $V$ into irreducible components, say $V=\bigoplus_{\alpha \in \mathcal{A}} r_{\alpha}V_{\alpha}$, we introduce the following notation. For any $\alpha \in \mathcal{A}$, 
\begin{itemize} 
\item $K_{\alpha}$ denotes the kernel of $V_{\alpha}$, and 
\item $\mathbb{T}_{\alpha}$ the connected component of identity of $K_{\alpha}$. 
\end{itemize}
Then $K_{\alpha}$ is a $(k-1)$-dimensional subgroup of $G$ and $\mathbb{T}_{\alpha}$ is a $(k-1)$-dimensional torus. Furthermore, let $m_{\alpha}$ be the index of $\mathbb{T}_{\alpha}$ in $K_{\alpha}$. The number $m_{\alpha}$ is in fact the greatest common divisor of the $k$-tuple $\alpha = (\alpha_1,\ldots,\alpha_k)$. In particular, it indicates whether $V_{\alpha}$ is a tensor power of another irreducible $G$-representation $V_{\tilde{\alpha}}$, where $\tilde{\alpha}=(\tilde{\alpha}_1, \ldots, \tilde{\alpha}_k)$ and~$\alpha=m_{\alpha}\tilde{\alpha}$. Let $\tilde{\mathcal{A}}=\{\tilde{\alpha} \,|\, \alpha \in \mathcal{A} \}$ and, for $\tilde{\alpha} \in \tilde{\mathcal{A}}$, define  $\mathcal{H}_{\tilde{\alpha}}^V=\{\alpha \in \mathcal{A} \,|\, m_{\alpha}\tilde{\alpha} = \alpha\}$. Geometrically, $\mathcal{H}_{\tilde{\alpha}}^V$~corresponds to the set of $\alpha \in \mathcal{A}$ such that $\mathbb{T}_{\alpha} =  \mathbb{T}_{\tilde{\alpha}}$. 

\begin{proposition}\label{prop:enlarging_domain}
Let $V=\bigoplus_{\alpha \in \mathcal{A}}r_{\alpha}V_{\alpha}$ and $W=\bigoplus_{\beta \in \mathcal{B}} q_{\beta}V_{\beta}$ be orthogonal $G$-re\-pre\-sen\-ta\-tions such that $\dim V < \dim W$. Any $G$-equivariant map $S(V) \to S(W)$ can be extended to a $G$-equivariant map $S(V') \to S(W)$, where $V'$ is an orthogonal $G$-representation such that $V \subseteq V'$ and $\dim V' = \dim W$. 
\end{proposition}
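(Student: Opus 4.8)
The plan is to reduce the statement to a purely representation-theoretic construction, followed by an application of the relative obstruction theory already used in Section~\ref{section-compact-lie}. First observe what $V'$ must look like: any extension $S(V')\to S(W)$ is in particular a $G$-map out of the sphere of a representation with $(V')^G=\{0\}$, so Theorem~\ref{thm:motivation} forces $\dim (V')^H\le\dim W^H$ for every closed subgroup $H\subseteq G$. Thus the problem splits into two tasks: (i) produce an orthogonal $G$-representation $V'$ with $V\subseteq V'$, $\dim V'=\dim W$ and $\dim (V')^H\le\dim W^H$ for all closed $H\subseteq G$; and (ii) extend the given map $f\colon S(V)\to S(W)$ across the pair $(S(V'),S(V))$. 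Note that Theorem~\ref{thm:motivation} applied to $f$ itself already yields $\dim V^H\le\dim W^H$ for all $H$, and this is the hypothesis I will exploit in~(i).

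Granting~(i), step~(ii) is a relative version of the argument proving Theorem~\ref{main-thm-abstract}. I would run the induction over the orbit types of $S(V')$, extending $f$ one orbit type at a time; on the fixed-point level this amounts to extending $f^H\colon S(V)^H\to S(W)^H$ over the $WH$-complex $S(V')^H$, whose new cells $S(V')^H\setminus S(V)^H$ carry a free $WH$-action. Since $S(W)^H$ is non-empty, simple and $(\dim W^H-2)$-connected, while the relative cells have dimension at most $\dim (V')^H-1\le\dim W^H-1$, all obstructions vanish and Theorem~\ref{obstruction_easy} yields the required extension at each stage, exactly as in the proof of Theorem~\ref{main-thm-abstract}(1).

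For~(i) I would fill up each direction of $W$ with its primitive irreducible. Writing $W=\bigoplus_{\beta\in\mathcal B}q_\beta V_\beta$ and using the notation of this section, set, for every $\tilde d\in\tilde{\mathcal B}$, $Q_{\tilde d}=\sum_{\beta\in\mathcal H^W_{\tilde d}}q_\beta$ and $R_{\tilde d}=\sum_{\alpha\in\mathcal H^V_{\tilde d}}r_\alpha$. Evaluating $\dim V^H\le\dim W^H$ at $H=\mathbb{T}_{\tilde d}$ shows on one hand that $R_{\tilde d}\le Q_{\tilde d}$, and on the other that every direction occurring in $V$ occurs in $W$, i.e.\ $\tilde{\mathcal A}\subseteq\tilde{\mathcal B}$. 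I then put
\[ V'=V\oplus\bigoplus_{\tilde d\in\tilde{\mathcal B}}(Q_{\tilde d}-R_{\tilde d})\,V_{\tilde d}, \]
adding only the primitive representations $V_{\tilde d}$ (those with $m_{\tilde d}=1$, so that $K_{\tilde d}=\mathbb{T}_{\tilde d}$). Since each weight of $V$ and of $W$ lies in exactly one direction, $\dim V'=\dim V+2\sum_{\tilde d}(Q_{\tilde d}-R_{\tilde d})=\dim W$, and $V\subseteq V'$ by construction.

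The main obstacle is verifying that this $V'$ is dominated by $W$, that is $\dim (V')^H\le\dim W^H$ for every closed $H$. Fix $H$ and split the directions of $W$ into $A=\{\tilde d:H\subseteq\mathbb{T}_{\tilde d}\}$ and its complement $B$. For $\tilde d\in A$ every weight in $\mathcal H^W_{\tilde d}$, as well as the added copies of $V_{\tilde d}$, are fixed by $H$, and a direct count shows that the contribution of such a direction to $\dim (V')^H$ equals its contribution to $\dim W^H$; meanwhile the copies added in $B$-directions contribute nothing. Hence the inequality collapses to the ``$B$-part'' statement that the number of weights of $V$ lying in $B$-directions and fixed by $H$ is at most the corresponding number for $W$. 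I would prove this last inequality by applying the domination hypothesis $\dim V^{H^\circ}\le\dim W^{H^\circ}$ at the auxiliary subgroup $H^\circ=\bigcap K_\alpha$, the intersection running over those weights $\alpha$ of $V$ that lie in a $B$-direction and are fixed by $H$; the delicate point, and the heart of the argument, is to check that passing from $H$ to $H^\circ$ does not bring any extra ``$A$-direction'' weight of $W$ into the count. It is precisely here that one must use that domination holds at \emph{all} closed subgroups, including the disconnected ones: the same rigidity is what rules out filling $V$ up in a generic or non-primitive direction, and is the reason the primitive choice $V_{\tilde d}$ is forced.
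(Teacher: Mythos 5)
Your overall strategy coincides with the paper's: enlarge $V$ by adding $Q_{\tilde d}-R_{\tilde d}$ copies of the \emph{primitive} irreducible $V_{\tilde d}$ in each direction of $W$, check that the resulting $V'$ is still dominated by $W$, and conclude by obstruction theory. (Your relative step (ii) is in fact more careful than the paper, which at this point only invokes Corollary~\ref{main-cor} to produce \emph{some} $G$-map $S(V')\to S(W)$ rather than an extension of the given one.) The genuine gap is exactly the step you flag as ``the heart of the argument'': the claim that, for $H^\circ=\bigcap K_\alpha$ with the intersection taken over the $B$-direction weights of $V$ fixed by $H$, no $A$-direction weight of $W$ is fixed by $H^\circ$. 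This is false. Take $G=\mathbb{T}^2$ and $H=\{1\}\times\mathbb{Z}_2$, so that $(1,0)$ is an $A$-direction (as $H\subseteq\mathbb{T}_{(1,0)}=\{1\}\times\mathbb{S}^1$) while $(0,1)$ and $(1,1)$ are $B$-directions. If the $B$-direction weights of $V$ fixed by $H$ are $(0,2)$ and $(2,2)$, then $H^\circ=K_{(0,2)}\cap K_{(2,2)}=\mathbb{Z}_2\times\mathbb{Z}_2$, and the $A$-direction weight $(2,0)$ satisfies $K_{(2,0)}=\mathbb{Z}_2\times\mathbb{S}^1\supseteq H^\circ$. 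So if $W$ contains $V_{(2,0)}$ (and nothing prevents this while $V$ has no $A$-direction weights at all), the inequality $\dim V^{H^\circ}\le\dim W^{H^\circ}$ no longer isolates the $B$-part: its right-hand side is inflated by $A$-direction weights of $W$ that have no counterpart on the left, and the ``$B$-part'' inequality does not follow.

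The statement you need is nevertheless true, but it must be proved one direction at a time rather than via a single auxiliary subgroup mixing several $B$-directions. For each $B$-direction $\tilde d$ apply Theorem~\ref{thm:motivation} at $H_{\tilde d}=H\cdot\mathbb{T}_{\tilde d}$: since $H_{\tilde d}\supseteq\mathbb{T}_{\tilde d}$, and any weight in a direction $\tilde e\neq\tilde d$ fixed by $\mathbb{T}_{\tilde d}$ would have kernel containing $\mathbb{T}_{\tilde e}\cdot\mathbb{T}_{\tilde d}=G$ (impossible, as $V^G=W^G=\{0\}$), one gets $V^{H_{\tilde d}}=(V^{\mathbb{T}_{\tilde d}})^H$ and likewise for $W$. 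Domination at $H_{\tilde d}$ therefore gives $\dim (V^{\mathbb{T}_{\tilde d}})^H\le\dim (W^{\mathbb{T}_{\tilde d}})^H$ for each $\tilde d\in B$ separately, and summing yields your ``$B$-part'' inequality. Note that you cannot shortcut this by quoting condition~(5) of Theorem~\ref{sufficient for torus}, since that equivalence is established later and relies on the present proposition. With this substitution your argument closes; everything else in the proposal (the choice of $V'$, the dimension count, and the relative obstruction-theoretic extension) is sound and matches the paper's route.
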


\begin{proof}
Let $S(V) \to S(W)$ be a $G$-equivariant map. Note that since $V^G=W^G=\{0\}$, we have $V=\bigoplus_{\tilde{\alpha} \in \tilde{\mathcal{A}}} V^{\mathbb{T}_{\tilde{\alpha}}}$ and $W=\bigoplus_{\tilde{\beta} \in \tilde{\mathcal{B}}} W^{\mathbb{T}_{\tilde{\beta}}}$. In view of Theorem \ref{thm:motivation}, $\dim V^{\mathbb{T}_{\tilde{\alpha}}} \leq \dim W^{\mathbb{T}_{\tilde{\alpha}}}$ for any $\tilde{\alpha} \in \tilde{\mathcal{A}}$, which shows that $\tilde{\mathcal{A}} \subseteq \tilde{\mathcal{B}}$. Furthermore,
\[ \sum_{\tilde{\alpha} \in \tilde{\mathcal{A}}} \dim V^{\mathbb{T}_{\tilde{\alpha}}} = \dim V < \dim W = \sum_{\tilde{\beta} \in \tilde{\mathcal{B}}} \dim W^{\mathbb{T}_{\tilde{\beta}}}.\] 
Consequently, $\tilde{\mathcal{A}} \subsetneq \tilde{\mathcal{B}}$ or $\dim W^{\mathbb{T}_{\tilde{\alpha}}} - \dim V^{\mathbb{T}_{\tilde{\alpha}}} = d_{\tilde{\alpha}} > 0$ for some $\tilde{\alpha} \in \tilde{\mathcal{A}}_1 \subseteq \tilde{\mathcal{A}}$. Set 
\[ V'=V \oplus \Big( \!\bigoplus_{\tilde{\alpha} \in \tilde{\mathcal{A}}_1} d_{\tilde{\alpha}}V_{\tilde{\alpha}}\Big) \oplus \Big(\!\!\bigoplus_{\tilde{\beta} \in \tilde{\mathcal{B}}\setminus \tilde{\mathcal{A}}} (\dim W^{\mathbb{T}_{\tilde{\beta}}}) V_{\tilde{\beta}}\Big). \]
Then $\dim V'=\dim W$ and $\dim\,(V')^H \leq \dim W^H$ for any $H \in \mathcal{O}\big(S(V')\big)$. Indeed, if $H$ properly contains a $(k-1)$-dimensional torus, then $\dim\,(V')^H = \dim V^H$. Otherwise, since $\tilde{\mathcal{A}}' = \tilde{\mathcal{B}}$,
\[ (V')^H = \Big(\bigoplus_{\tilde{\beta} \in \tilde{\mathcal{B}}} (V')^{\mathbb{T}_{\tilde{\beta}}}\Big)^H = \bigoplus_{\tilde{\beta} \in \tilde{\mathcal{B}}} \big((V')^{\mathbb{T}_{\tilde{\beta}}}\big)^H = \!\! \bigoplus_{\substack{\tilde{\beta} \,:\, \tilde{\beta} \in \mathcal{\tilde{B}}\\
\phantom{{}=\tilde{\beta} \,:} H\subseteq \mathbb{T}_{\tilde{\beta}}}} \!\! (V')^{\mathbb{T}_{\tilde{\beta}}}. \]
But $\dim\,(V')^{\mathbb{T}_{\tilde{\beta}}} = \dim W^{\mathbb{T}_{\tilde{\beta}}}$ for any $\tilde{\beta} \in \tilde{\mathcal{B}}$ by construction, hence $\dim\,(V')^H = \dim W^H$. The existence of a $G$-equivariant map $S(V') \to S(W)$ now follows from Corollary \ref{main-cor}.
\end{proof}

\begin{lemma}\label{lemma:divisibility}
If there exists a $G$-equivariant map $S(V) \to S(W)$, then $e(V)$ divides $e(W)$ in $H^*(BG;\mathbb{Z})$.
\end{lemma}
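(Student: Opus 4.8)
The plan is to pass to Borel constructions and play the two Gysin sequences against one another. Write $m=\dim V$ and $n=\dim W$; since $G=\mathbb{T}^k$, Theorem~\ref{thm:motivation} applied to the trivial subgroup gives $m\leq n$. Applying the functor $EG\times_G(-)$ to $f\colon S(V)\to S(W)$ produces a fibrewise map
\[ \bar f \colon EG\times_G S(V) \longrightarrow EG\times_G S(W) \]
of sphere bundles over $BG$, and if $\pi_V$, $\pi_W$ denote the two bundle projections, then $\pi_W\circ\bar f=\pi_V$, so that $\bar f^*\circ\pi_W^*=\pi_V^*$ on cohomology. As recorded in Section~\ref{sect:Euler}, the bundles $EG\times_G V$ and $EG\times_G W$ are integrally orientable because $V$ and $W$ admit complex structures; hence their sphere bundles are oriented and the Gysin sequences below can be taken with $\mathbb{Z}$ coefficients.

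First I would feed $e(W)$ through the Gysin sequence of $\pi_W$, whose fibre is a sphere of dimension $n-1$. Its relevant segment
\[ H^0(BG) \xrightarrow{\ \cup\, e(W)\ } H^n(BG) \xrightarrow{\ \pi_W^*\ } H^n\big(EG\times_G S(W)\big) \]
shows that $\pi_W^*\,e(W)=0$, as $e(W)$ is the image of $1$ under cup product with $e(W)$. Pulling back along $\bar f$ and using $\bar f^*\pi_W^*=\pi_V^*$ then yields
\[ \pi_V^*\,e(W) = \bar f^*\big(\pi_W^*\,e(W)\big) = 0, \]
so that $e(W)$ lies in the kernel of $\pi_V^*\colon H^n(BG)\to H^n\big(EG\times_G S(V)\big)$.

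It remains to identify this kernel, which is where the divisibility is extracted. The Gysin sequence of $\pi_V$, whose fibre is a sphere of dimension $m-1$, reads in the pertinent degree
\[ H^{n-m}(BG) \xrightarrow{\ \cup\, e(V)\ } H^n(BG) \xrightarrow{\ \pi_V^*\ } H^n\big(EG\times_G S(V)\big), \]
and exactness identifies $\ker\pi_V^*$ with the image of cup product with $e(V)$, namely the ideal $e(V)\cdot H^{n-m}(BG)$. Hence $e(W)=e(V)\cdot c$ for some $c\in H^{n-m}(BG)\subseteq H^*(BG;\mathbb{Z})$, which is precisely the claimed divisibility. One checks that this is consistent with $m\leq n$: were $n<m$, the group $H^{n-m}(BG)$ would vanish, forcing $e(W)=0$ and hence, by the criterion of Section~\ref{sect:Euler}, a trivial summand in $W$, contrary to $W^G=\{0\}$.

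The only genuinely delicate point is the validity of the Gysin sequence over the infinite-dimensional base $BG$. I would either invoke it in the form available for oriented sphere bundles over an arbitrary paracompact base, or restrict to finite skeleta of $BG$, where the bundles become honest oriented sphere bundles over finite complexes, and pass to the inverse limit; the Euler classes and the identifications above are compatible with these restrictions since $H^*(BG;\mathbb{Z})$ is the limit of the cohomologies of its skeleta in each fixed degree. Everything else—the orientation, the relation $\pi_W\bar f=\pi_V$, and the degree bookkeeping—is routine.
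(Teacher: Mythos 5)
Your argument is correct, but it takes a genuinely different route from the paper's. The paper's proof is a reduction: it first uses Theorem~\ref{thm:motivation} to get $\dim V\leq\dim W$, then invokes Proposition~\ref{prop:enlarging_domain} to enlarge $V$ to a representation $V'$ with $\dim V'=\dim W$ admitting a $G$-map $S(V')\to S(W)$, cites \cite[Proposition~1.8]{Marzantowicz1} as a black box for the divisibility $e(V')\mid e(W)$ in the equal-dimension case, and finally peels off the factor $e(V^{\perp})$ using multiplicativity of Euler classes. You instead prove the divisibility directly from the two Gysin sequences of the oriented sphere bundles $EG\times_G S(V)\to BG$ and $EG\times_G S(W)\to BG$: the relation $\pi_W^*e(W)=0$ pulled back along $\bar f$ places $e(W)$ in $\ker\pi_V^*=e(V)\cdot H^{n-m}(BG)$. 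This is essentially the classical argument underlying the result the paper cites, so in spirit you are reproving the black box rather than using it; the payoff is that your proof is self-contained, works uniformly without the dimension-equalizing step (so Proposition~\ref{prop:enlarging_domain} is not needed here at all), and even recovers $\dim V\leq\dim W$ as a by-product, since $H^{n-m}(BG)=0$ for $n<m$ would force $e(W)=0$, contradicting $W^G=\{0\}$. The only points requiring care --- integral orientability of the bundles (guaranteed by the complex structures on $V$ and $W$ recorded in Section~\ref{sect:Euler}) and the validity of the Gysin sequence over the CW base $BG$ --- you address adequately. The one thing your route does not by itself supply, and which the paper's arrangement quietly exploits elsewhere, is the auxiliary map $S(V')\to S(W)$; but for the statement of the lemma as given, your proof is complete.
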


\begin{proof}
In view of Theorem \ref{thm:motivation}, $\dim V \leq \dim W$. If $\dim V < \dim W$, use Proposition \ref{prop:enlarging_domain} to obtain a $G$-equivariant map $S(V') \to S(W)$, where $V'$ is an orthogonal $G$-representation such that $V\subseteq V'$ and $\dim V' = \dim W$. In view of \cite[Proposition 1.8]{Marzantowicz1}, $e(V')$ divides $e(W)$. Since $e(V')=e(V)e(V^{\perp})$, where $V^{\perp}$ is the orthogonal complement of $V$ in $V'$, we see that $e(V)$ also divides $e(W)$.
\end{proof}

\begin{theorem}\label{sufficient for torus}
Let $V=\bigoplus_{\alpha \in \mathcal{A}}r_{\alpha}V_{\alpha}$ and $W=\bigoplus_{\beta \in \mathcal{B}} q_{\beta}V_{\beta}$ be orthogonal $G$-representations. The following conditions are equivalent.
\begin{enumerate}
\item There exists a $G$-equivariant map $S(V)\to S(W)$.
\item For any $H \in \mathcal{O}\big(S(V)\big)$, the Euler class of $V^H$ divides the Euler class of $W^H$ in $H^*(BG;\mathbb{Z})$.
\item For any $H \in \mathcal{O}\big(S(V)\big)$, $\dim V^H \leq \dim W^H$.
\item For any $(k-1)$-dimensional isotropy subgroup $H \in \mathcal{O}\big(S(V)\big)$, $\dim V^H \leq \dim W^H$.
\item For any $\tilde{\alpha} \in \tilde{\mathcal{A}}$ and any $m \in \mathbb{N}$,
\[ \sum_{\substack{\alpha\,:\,\alpha \in \mathcal{H}_{\tilde{\alpha}}^V \\
\phantom{\alpha\,:\,}m \,|\,m_{\alpha}}} \!\! r_{\alpha} \;\; \leq \sum_{\substack{\beta\,:\,\beta \in \mathcal{H}_{\tilde{\alpha}}^{W} \\ \phantom{\beta\,:\,}m \,|\, m_{\beta}}} \!\! q_{\beta}.\]
\end{enumerate}
\end{theorem}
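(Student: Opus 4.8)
The plan is to establish the two cycles $(1)\Rightarrow(2)\Rightarrow(3)\Rightarrow(1)$ and $(3)\Rightarrow(4)\Rightarrow(5)\Rightarrow(3)$, which together force all five conditions to be equivalent. The first cycle is essentially formal, resting on the results already proved, while the passage between $(3)$, $(4)$ and $(5)$ carries the genuine combinatorial content. Throughout I will use that, because $G$ is abelian, each $V^H$ and $W^H$ is again an orthogonal $G$-representation without trivial summand, so the Euler-class calculus of Section~\ref{sect:Euler} applies to it verbatim.

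For $(1)\Rightarrow(2)$ I would fix $H\in\mathcal{O}\big(S(V)\big)$ and restrict a given $G$-equivariant map $f\colon S(V)\to S(W)$ to $H$-fixed points. As $f$ is in particular $H$-equivariant, it carries $S(V)^H=S(V^H)$ into $S(W)^H=S(W^H)$, so Lemma~\ref{lemma:divisibility} applies to this restriction and yields $e(V^H)\mid e(W^H)$ in $H^*(BG;\mathbb{Z})$. For $(2)\Rightarrow(3)$ I would compare degrees: by the formula of Section~\ref{sect:Euler} the class $e(V^H)$ is a \emph{nonzero} homogeneous polynomial of degree $\tfrac12\dim V^H$, and similarly for $W^H$, so a divisibility relation between two nonzero elements of $\mathbb{Z}[t_1,\dots,t_k]$ forces $\tfrac12\dim V^H\le\tfrac12\dim W^H$. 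Finally $(3)\Rightarrow(1)$ is precisely Corollary~\ref{main-cor}(1).

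The combinatorial core is the cycle $(3)\Rightarrow(4)\Rightarrow(5)\Rightarrow(3)$, for which I would first record a decomposition of fixed-point dimensions. For $\tilde{\alpha}\in\tilde{\mathcal{A}}$ let $\chi_{\tilde{\alpha}}\colon G\to\mathbb{S}^1$ be the primitive character with kernel $\mathbb{T}_{\tilde{\alpha}}$, so that $\chi_\alpha=\chi_{\tilde{\alpha}}^{\,m_\alpha}$ and $K_\alpha=\chi_{\tilde{\alpha}}^{-1}(\mu_{m_\alpha})$ for $\alpha\in\mathcal{H}_{\tilde{\alpha}}^V$, where $\mu_n$ is the group of $n$-th roots of unity. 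For a closed subgroup $H$ write $\chi_{\tilde{\alpha}}(H)=\mu_{n_{\tilde{\alpha}}}$ (with the convention that $\chi_{\tilde{\alpha}}(H)=\mathbb{S}^1$ contributes no summand); then $H\subseteq K_\alpha$ if and only if $n_{\tilde{\alpha}}\mid m_\alpha$, and grouping the irreducible summands of $V$ by their primitive direction gives
\[ \tfrac12\dim V^H=\sum_{\tilde{\alpha}\in\tilde{\mathcal{A}}}\ \sum_{\substack{\alpha\in\mathcal{H}_{\tilde{\alpha}}^V\\ n_{\tilde{\alpha}}\mid m_\alpha}} r_\alpha, \]
and likewise for $W$. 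Now $(3)\Rightarrow(4)$ is the trivial restriction to $(k-1)$-dimensional isotropy subgroups. For $(4)\Rightarrow(5)$, given $\tilde{\alpha}$ and $m$ I would set $S=\{\alpha\in\mathcal{H}_{\tilde{\alpha}}^V:m\mid m_\alpha\}$; if $S=\emptyset$ the inequality is vacuous, and otherwise I would put $d=\gcd\{m_\alpha:\alpha\in S\}$ and observe that a vector supported exactly on $S$ has isotropy group $\chi_{\tilde{\alpha}}^{-1}(\mu_d)$, a \emph{genuine} $(k-1)$-dimensional element of $\mathcal{O}\big(S(V)\big)$ for which $\{\alpha\in\mathcal{H}_{\tilde{\alpha}}^V:d\mid m_\alpha\}=S$; applying $(4)$ to it and then passing from divisibility by $d$ to the coarser divisibility by $m$ on the $W$-side yields the inequality of $(5)$. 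For $(5)\Rightarrow(3)$ I would feed $(5)$, with $m=n_{\tilde{\alpha}}$, into the displayed decomposition block by block over $\tilde{\alpha}\in\tilde{\mathcal{A}}$; taking $m=1$ shows $\tilde{\mathcal{A}}\subseteq\tilde{\mathcal{B}}$, so the extra directions $\tilde{\beta}\in\tilde{\mathcal{B}}\setminus\tilde{\mathcal{A}}$ contribute nothing to $\dim V^H$ and only nonnegative terms to $\dim W^H$, and summing the blockwise inequalities gives $\dim V^H\le\dim W^H$.

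I expect the main obstacle to be this last combinatorial translation, specifically the mismatch between quantifiers in $(4)$ and $(5)$: condition $(4)$ supplies inequalities only at subgroups that are \emph{actually} isotropy groups of $S(V)$, whereas $(5)$ ranges over all $m\in\mathbb{N}$. The gcd construction is exactly what bridges this gap, certifying that the values of $m$ not realized by an isotropy subgroup impose no new constraint. Keeping careful track of which primitive directions occur in $V$ as against $W$ — and hence of the inclusion $\tilde{\mathcal{A}}\subseteq\tilde{\mathcal{B}}$ — is what makes the blockwise summation reassemble correctly into the global dimension inequality.
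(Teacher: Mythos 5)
Your proposal is correct and follows essentially the same route as the paper: the cycle $(1)\Rightarrow(2)\Rightarrow(3)\Rightarrow(1)$ via Lemma~\ref{lemma:divisibility}, degree comparison of Euler classes, and Corollary~\ref{main-cor}, together with the blockwise decomposition of $\dim V^H$ over the primitive directions $\tilde{\alpha}$ to relate $(3)$, $(4)$ and $(5)$. Your gcd construction in $(4)\Rightarrow(5)$ makes explicit a quantifier point that the paper's ``$(4)$ can be rephrased as'' glosses over, namely that values of $m$ not realized by an actual isotropy subgroup of $S(V)$ impose no new constraint; this is a welcome clarification rather than a deviation.
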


\begin{proof}
``$(1) \Rightarrow (2) \Rightarrow (3) \Rightarrow (1)$''. Let $S(V) \to S(W)$ be a $G$-equivariant map. Choose a subgroup $H \in \mathcal{O}\big(S(V)\big)$ and restrict to a $G$-equivariant map \mbox{$S(V^H) \to S(W^H)$}. It follows from Lemma \ref{lemma:divisibility} that $e(V^H)$ divides $e(W^H)$ in $H^*(BG;\mathbb{Z})$. If this happens, then $\deg e(V^H) \leq \deg e(W^H)$, which directly translates into $\dim V^H \leq \dim W^H$. This last condition for any $H \in \mathcal{O}\big(S(V)\big)$ implies the existence of a $G$-map $S(V) \to S(W)$ via Corollary \ref{main-cor}.

``$(4) \Rightarrow (3)$''. Assume without loss of generality that $k\geq 2$. As exhibited in the proof of Proposition \ref{prop:enlarging_domain}, for $H \in \mathcal{O}\big(S(V)\big)$ at most $(k-2)$-dimensional,
\[ V^H=\!\! \bigoplus_{\substack{\tilde{\alpha} \,:\, \tilde{\alpha} \in \mathcal{\tilde{A}}\\
\phantom{{}=\tilde{\alpha} \,:} H\subseteq \mathbb{T}_{\tilde{\alpha}}}} \!\! V^{\mathbb{T}_{\tilde{\alpha}}} \textnormal{ and } W^H = \!\! \bigoplus_{\substack{\tilde{\beta} \,:\, \tilde{\beta} \in \mathcal{\tilde{B}}\\
\phantom{{}=\tilde{\beta} \,:} H\subseteq \mathbb{T}_{\tilde{\beta}}}} \!\! W^{\mathbb{T}_{\tilde{\beta}}}.\]
Since $\dim V^{K_{\alpha}} \leq \dim W^{K_{\alpha}}$ for any $\alpha \in \mathcal{A}$, we infer that $\tilde{\mathcal{A}} \subseteq \tilde{\mathcal{B}}$. Thus in order to wrap this part of the proof up, it suffices to observe that $\dim V^{\mathbb{T}_{\tilde{\alpha}}} \leq \dim W^{\mathbb{T}_{\tilde{\alpha}}}$ for any $\tilde{\alpha} \in \tilde{\mathcal{A}}$. Indeed, $H=\bigcap_{\alpha \in \mathcal{H}_{\tilde{\alpha}}^V} K_{\alpha}$ is a $(k-1)$-dimensional isotropy of $S(V)$ such that $V^{\mathbb{T}_{\tilde{\alpha}}} = V^H$, hence 
\[ \dim V^{\mathbb{T}_{\tilde{\alpha}}} = \dim V^H \leq \dim W^H \leq \dim W^{\mathbb{T}_{\tilde{\alpha}}}. \]

``$(4) \Leftrightarrow (5)$''. Note that if we view $V^{\mathbb{T}_{\tilde{\alpha}}}$ and $W^{\mathbb{T}_{\tilde{\alpha}}}$ as representations of $\mathbb{S}^1 = G/\mathbb{T}_{\tilde{\alpha}}$, then~$(4)$ can be rephrased as $\dim\, (V^{\mathbb{T}_{\tilde{\alpha}}})^{\mathbb{Z}_m} \leq \dim\, (W^{\mathbb{T}_{\tilde{\alpha}}})^{\mathbb{Z}_m}$ for any $\tilde{\alpha} \in \mathcal{A}$ and $m \in \mathbb{N}$. But
\[ (V^{\mathbb{T}_{\tilde{\alpha}}})^{\mathbb{Z}_m} =   \!\! \bigoplus_{\substack{\alpha \,:\,\alpha \in \mathcal{H}_{\tilde{\alpha}}^V \\
\phantom{\alpha\,:\,}m \,|\,m_{\alpha}}} \!\! r_{\alpha}V_{\alpha}, \]
which shows that 
\[ \dim\,(V^{\mathbb{T}_{\tilde{\alpha}}})^{\mathbb{Z}_m} = \!\!  \sum_{\substack{\alpha\,:\,\alpha \in \mathcal{H}_{\tilde{\alpha}}^V \\
\phantom{\alpha\,:\,} m \,|\,m_{\alpha}}} \!\! r_{\alpha}.\] 
An analogous thing happens for $W$, which concludes the proof.
\end{proof}

\begin{remark}
The implication ``$(4) \Rightarrow (3)$'' can be seen in a more geometrical manner. As observed above, (4) amounts precisely to the condition \eqref{A} for $V^{\mathbb{T}_{\tilde{\alpha}}}$ and $W^{\mathbb{T}_{\tilde{\alpha}}}$ viewed as representations of $\mathbb{S}^1=G/\mathbb{T}_{\tilde{\alpha}}$, for any $\tilde{\alpha} \in \tilde{\mathcal{A}}$. Therefore Corollary \ref{main-cor} implies the existence of an $\mathbb{S}^1$-equivariant map $f^{\mathbb{T}_{\tilde{\alpha}}} \colon S(V^{\mathbb{T}_{\tilde{\alpha}}}) \to S(W^{\mathbb{T}_{\tilde{\alpha}}})$, which can be considered as a $G$-equivariant map. Consequently, the join construction 
\[ S(V) = S\Big(\bigoplus_{\tilde{\alpha} \in \tilde{\mathcal{A}}} V^{\mathbb{T}_{\tilde{\alpha}}}\Big) = \bigast_{\tilde{\alpha} \in \tilde{\mathcal{A}}}
S(V^{\mathbb{T}_{\tilde{\alpha}}})\longrightarrow \bigast_{\tilde{\alpha} \in \tilde{\mathcal{A}}} S(W^{\mathbb{T}_{\tilde{\alpha}}}) = S\Big(\bigoplus_{\tilde{\alpha} \in \tilde{\mathcal{A}}} W^{\mathbb{T}_{\tilde{\alpha}}}\Big) \subseteq S(W)\]
yields the desired $G$-equivariant map.

On a related note, the implication ``$(5) \Rightarrow (2)$'' is a purely algebraic fact and can be derived directly, without any geometrical interpretation. We would like to thank A.~Schinzel for suggesting the following argument to us. 

Suppose that $(5)$ is satisfied and fix $\tilde{\alpha} \in \tilde{\mathcal{A}}$. We will show that 
\[ e(V^{\mathbb{T}_{\tilde{\alpha}}}) = \prod_{\alpha \in \mathcal{H}_{\tilde{\alpha}}^V} (\alpha_1t_1 + \cdots + \alpha_k t_k)^{r_{\alpha}} = \Big(\!\!\prod_{\alpha \in \mathcal{H}_{\tilde{\alpha}}^V} m_{\alpha}^{r_{\alpha}}\Big) (\tilde{\alpha}_1t_1 + \cdots + \tilde{\alpha}_k t_k)^{\sum_{\alpha \in \mathcal{H}_{\tilde{\alpha}}^V}r_{\alpha}}\] 
divides 
\[ e(W^{\mathbb{T}_{\tilde{\alpha}}}) = \Big(\!\!\prod_{\beta \in \mathcal{H}_{\tilde{\alpha}}^{W}} m_{\beta}^{q_{\beta}}\Big) (\tilde{\alpha}_1t_1 + \cdots + \tilde{\alpha}_k t_k)^{\sum_{\beta \in \mathcal{H}_{\tilde{\alpha}}^{W}}q_{\beta}}.\]
If $m=1$, then $(5)$ yields $\sum_{\alpha \in \mathcal{H}_{\tilde{\alpha}}^V} r_{\alpha} \leq \sum_{\beta \in \mathcal{H}_{\tilde{\alpha}}^{W}} q_{\beta}$, thus it suffices to prove that $\prod_{\alpha \in \mathcal{H}_{\tilde{\alpha}}^V} m_{\alpha}^{r_{\alpha}}$ divides $\prod_{\beta \in \mathcal{H}_{\tilde{\alpha}}^{W}} m_{\beta}^{q_{\beta}}$.

Let $n$ be the highest power of a prime $p$ appearing in any of $m_{\alpha}$'s. Observe that $p$ appears in $\prod_{\alpha \in \mathcal{H}_{\tilde{\alpha}}^V} m_{\alpha}^{r_{\alpha}}$ with the power
\begin{align*}
M &= \sum_{\substack{p^{\phantom{2}}\,\mid\,m_{\alpha}\\
p^2\,\nmid\,m_{\alpha}}} \!r_{\alpha} \;+\; 
2\!\sum_{\substack{p^2\,\mid\,m_{\alpha}\\
p^3\,\nmid\,m_{\alpha}}} \!r_{\alpha} \;+\; \cdots \;+\; n \!\sum_{\substack{p^n\,\mid\,m_{\alpha}}} \!r_{\alpha} \\
&= \sum_{\substack{p\,\mid\,m_{\alpha}\\
\phantom{p^2\,\mid\,m_{\alpha}}}} \!r_{\alpha} \;+ \sum_{\substack{p^2\,\mid\,m_{\alpha}\\
p^3\,\nmid\,m_{\alpha}}} \!r_{\alpha} \;+\;
2 \! \sum_{\substack{p^3\,\mid\,m_{\alpha}\\
p^4\,\nmid\,m_{\alpha}}} \!r_{\alpha} \;+\; \cdots \;+\; (n-1) \!\sum_{p^n\,\mid\,m_{\alpha}} \!r_{\alpha} \;=\; \cdots \\
&= \sum_{\substack{p\,\mid\,m_{\alpha}\\
\phantom{p^2\,\mid\,m_{\alpha}}}} \!r_{\alpha} \;+ \sum_{p^2\,\mid\,m_{\alpha}} \!r_{\alpha} \;+\; \cdots \;+ \sum_{p^n\,\mid\,m_{\alpha}} \!r_{\alpha},
\end{align*}
where $\alpha$ varies over $\mathcal{H}_{\tilde{\alpha}}^V$. Likewise, if $m$ is the highest power of $p$ appearing in any of $m_{\beta}$'s, then $p$ appears in $\prod_{\beta \in \mathcal{H}_{\tilde{\alpha}}^{W}} m_{\beta}^{q_{\beta}}$ with the power
\[ N= \sum_{p\,\mid\,m_{\beta}} q_{\beta} \;+ \sum_{p^2\,\mid\,m_{\beta}} \! q_{\beta} \;+\;\cdots \;+ \sum_{p^m\,\mid\,m_{\beta}} \! q_{\beta},\]
where $\beta$ varies over $\mathcal{H}_{\tilde{\alpha}}^{W}$. By assumption, for any $i \geq 0$,
\[ \sum_{\substack{\alpha \,:\, \alpha \in  \mathcal{H}_{\tilde{\alpha}}^V \\
\phantom{{}=\alpha \,:} \! p^i\,\mid\,m_{\alpha}}} \!\!r_{\alpha} \;\;\leq \sum_{\substack{\beta \,:\, \beta \in  \mathcal{H}_{\tilde{\alpha}}^{W} \\
\phantom{{}=\beta \,:} \! p^i\,\mid\,m_{\beta}}} \!\! q_{\beta}, \]
hence $M\leq N$. This shows that, for any prime $p$, the power of $p$ which appears in the decomposition of $e(V^{\mathbb{T}_{\tilde{\alpha}}})$ does not exceed the one which appears in the decomposition of $e(W^{\mathbb{T}_{\tilde{\alpha}}})$. Therefore $e(V^{\mathbb{T}_{\tilde{\alpha}}})$ divides $e(W^{\mathbb{T}_{\tilde{\alpha}}})$. Consequently, using the fact that $\tilde{\mathcal{A}} \subseteq \tilde{\mathcal{B}}$, $e(V) = \prod_{\alpha \in \tilde{\mathcal{A}}} e(V^{\mathbb{T}_{\tilde{\alpha}}})$ divides $e(W) = \prod_{\beta \in \tilde{\mathcal{B}}} e(W^{\mathbb{T}_{\tilde{\beta}}})$. A similar argument shows that the same thing happens for $e(V^H)$ and $e(W^H)$.
\end{remark}

The second-named author asked the following question in \cite[Problem 2.6]{Marzantowicz1}. Given orthogonal $\mathbb{S}^1$-representations $V$ and $W$ with $V^{\mathbb{S}^1}=W^{\mathbb{S}^1}=\{0\}$, is divisibility of $e(W)$ by $e(V)$ sufficient for the existence of an $\mathbb{S}^1$-equivariant map $S(V) \to S(W)$? The following example shows that the answer is negative in general.

\begin{example}
Let $V_1$ be the one-dimensional fixed-point free $\mathbb{S}^1$-representation. Define $V=2V_1^3 \oplus V_1^5$ and $W=V_1^{18} \oplus 2V_1^{5}$. Then $e(V)=45t^3$ divides $e(W)=450t^3$, but the existence of an $\mathbb{S}^1$-equivariant map $S(V) \to S(W)$ would violate Theorem \ref{sufficient for torus}, as $\dim V^{\mathbb{Z}_3}=2 > 1 = \dim W^{\mathbb{Z}_3}$.
\end{example}

\subsection{} 

It is known that if a group $G$ is not an extension of a finite $p$-group of exponent $p$ by a torus, then $G$ does not have the strong Borsuk--Ulam property, see \cite{MarzantowiczPacific}. It is an open problem whether every such extension enjoys the strong Borsuk--Ulam property; this is not even clear in the case $G=\mathbb{T}^k \times (\mathbb{Z}_p)^{\ell}$. (We note that the proof of \cite[Lemma~1.2]{MarzantowiczPacific} is incomplete and thus does not settle this last problem.)\medskip

\noindent\textbf{Conjecture.} \textit{A group $G$ has the strong Borsuk--Ulam property if and only if $G=\mathbb{T}^k\times (\mathbb{Z}_p)^{\ell}$.}\medskip

\noindent It remains to be verified that the following classes of groups do not have this property:
\begin{itemize}
\item non-abelian finite groups with exponent $p$, and
\item non-trivial extensions $0 \to \mathbb{T}^k \to G \to (\mathbb{Z}_p)^{\ell} \to 0$.
\end{itemize}


\noindent\textbf{Acknowledgments.}  
We wish to thank I. Nagasaki for pointing out a mistake in an earlier version of this paper.

The first and second authors have been supported by the National Science Centre under grants 2014/12/S/ST1/00368 and 2015/19/B/ST1/01458, respectively. The third author has been supported by DST INSPIRE Scheme IFA-11MA-01/2011 and DST-RSF project INT/RVS/RSF/P-2.


\end{document}